\theoremstyle{plain}
\newtheorem{theorem}{Theorem}[section]
\newtheorem{lemma}[theorem]{Lemma}
\newtheorem{proposition}[theorem]{Proposition}
\newtheorem{conjecture}[theorem]{Conjecture}
\theoremstyle{definition}
\title[\centering\parbox{0.8\linewidth}{\centering Nonexistence of perfect $2$-error-correcting Lee codes in certain dimensions}]{Nonexistence of perfect $2$-error-correcting Lee codes in certain dimensions}
\author{Dongryul Kim}
\email{\href{mailto:dkim04@college.harvard.edu}{{\tt dkim04@college.harvard.edu}}}
\address{Harvard College, Cambridge, MA, 02138}
\begin{document}

\maketitle

\begin{abstract}
  The Golomb--Welch conjecture states that there are no perfect $e$-error-correcting codes in $\mathbb{Z}^n$ for $n \ge 3$ and $e \ge 2$. In this note, we prove the nonexistence of perfect $2$-error-correcting codes for a certain class of $n$, which is expected to be infinite. This result further substantiates the Golomb--Welch conjecture.
\end{abstract}

\section{Introduction}

For an integer $q \ge 2$, consider the space $(\mathbb{Z}/q\mathbb{Z})^n$ equipped with the Lee metric $d$ given by
\[
  d(\mathbf{x}, \mathbf{y}) = \sum_{i=1}^{n} \min\{ \lvert x_i - y_i \rvert, q - \lvert x_i - y_i \rvert \}.
\]
An \emph{$e$-error-correcting Lee code} is a subset $C \subseteq (\mathbb{Z} / q\mathbb{Z})^n$ such that any two distinct elements of $C$ have distance at least $2e+1$. An $e$-error-correcting Lee code $C$ is further called a \emph{perfect $e$-error-correcting Lee code} if for each $x \in (\mathbb{Z} / q\mathbb{Z})^n$, there exists a unique element $c \in C$ such that $d(x, c) \le e$. A perfect $e$-error-correcting Lee code in $(\mathbb{Z}/q\mathbb{Z})^n$ is also called simply a $PL(n,e,q)$-code. 

There is an equivalent description of error-correcting Lee codes that uses the language of tilings. Consider the \emph{Lee sphere} 
\[
  S(n,e,q) = \{ \mathbf{x} \in (\mathbb{Z}/q\mathbb{Z})^n : d(\mathbf{x}, \mathbf{0}) \le e \}
\]
of radius $e$. An $e$-error-correcting Lee code is a subset $C \subseteq (\mathbb{Z}/q\mathbb{Z})^n$ such that for any $\mathbf{x} \neq \mathbf{y}$ in $C$, the two spheres $\mathbf{x} + S(n,e,q)$ and $\mathbf{y} + S(n,e,q)$ are disjoint. Thus it can be naturally identified with a translational packing of $S(n,e,q)$ in $(\mathbb{Z}/q\mathbb{Z})^n$. A perfect $e$-error-correcting Lee code then corresponds to a translational tiling of $(\mathbb{Z}/q\mathbb{Z})^n$ by $S(n,e,q)$. 

If $q \ge 2e + 1$, then the natural projection map $\mathbb{Z}^n \to (\mathbb{Z}/q\mathbb{Z})^n$ restricts to a bijection from 
\[
  S(n,e) = \{ \mathbf{x} \in \mathbb{Z}^n : \lvert x_1 \rvert + \lvert x_2 \rvert + \dots + \lvert x_n \rvert \le e \}
\]
to $S(n,e,q)$. Any tiling of $(\mathbb{Z}/q\mathbb{Z})^n$ by $S(n,e,q)$ will then pull back via the projection to a tiling of $\mathbb{Z}^n$ by $S(n,e)$. Let us call a subset $C \subseteq \mathbb{Z}^n$ a \emph{perfect $e$-error-correcting Lee code} in $\mathbb{Z}^n$, or simply a $PL(n,e)$-code, if the translates of $S(n,e)$ centered at vectors of $C$ form a tiling of $\mathbb{Z}^n$. Then a $PL(n,e,q)$-code induces a $PL(n,e)$-code that is a disjoint union of cosets of $q\mathbb{Z}^n \subset \mathbb{Z}^n$. Conversely, any such $PL(n,e)$-code clearly comes from a $PL(n,e,q)$-code. We restate this in the following proposition.

\begin{proposition}
  For $q \ge 2e+1$, there exists a natural bijection between $PL(n,e,q)$-codes and $PL(n,e)$-codes that is a union of cosets of $q\mathbb{Z}^n \subset \mathbb{Z}^n$, given by taking the image or the inverse image with respect to the projection map $\mathbb{Z}^n \to (\mathbb{Z}/q\mathbb{Z})^n$. 
\end{proposition}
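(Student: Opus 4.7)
The plan is to formalize the discussion immediately preceding the proposition. Let $\pi : \mathbb{Z}^n \to (\mathbb{Z}/q\mathbb{Z})^n$ denote the canonical projection. First, I would establish the crucial ingredient already noted in the text: for $q \ge 2e+1$, the restriction of $\pi$ to $S(n,e) \subset \mathbb{Z}^n$ is a bijection onto $S(n,e,q)$. The surjectivity is immediate from the definitions; for injectivity, suppose $\mathbf{x}, \mathbf{y} \in S(n,e)$ satisfy $\pi(\mathbf{x}) = \pi(\mathbf{y})$. Then $x_i - y_i \in q\mathbb{Z}$ for each $i$, while $|x_i|, |y_i| \le e$ forces $|x_i - y_i| \le 2e < q$, so $\mathbf{x} = \mathbf{y}$. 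It also follows that for any $\mathbf{c} \in \mathbb{Z}^n$, the translate $\mathbf{c} + S(n,e)$ maps bijectively to $\pi(\mathbf{c}) + S(n,e,q)$ under $\pi$.

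Next I would define the two maps and verify they are mutually inverse. In one direction, given a $PL(n,e,q)$-code $\bar{C} \subseteq (\mathbb{Z}/q\mathbb{Z})^n$, set $C = \pi^{-1}(\bar{C})$, which is manifestly a union of cosets of $q\mathbb{Z}^n$. I need to check that the translates $\mathbf{c} + S(n,e)$ for $\mathbf{c} \in C$ partition $\mathbb{Z}^n$. For any $\mathbf{x} \in \mathbb{Z}^n$, the tiling property of $\bar{C}$ yields a unique $\bar{c} \in \bar{C}$ with $\pi(\mathbf{x}) \in \bar{c} + S(n,e,q)$; using the bijection from the previous paragraph, lift $\bar{c}$ uniquely to the $\mathbf{c} \in \pi^{-1}(\bar{c})$ such that $\mathbf{x} \in \mathbf{c} + S(n,e)$. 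This gives existence and uniqueness simultaneously. In the other direction, given a $PL(n,e)$-code $C \subseteq \mathbb{Z}^n$ which is a union of cosets of $q\mathbb{Z}^n$, set $\bar{C} = \pi(C)$; the same bijection shows that the translates $\bar{c} + S(n,e,q)$ for $\bar{c} \in \bar{C}$ tile $(\mathbb{Z}/q\mathbb{Z})^n$, since covering and disjointness on a fundamental domain $[0,q)^n$ for $q\mathbb{Z}^n$ transfer to the quotient. Finally, $\pi \pi^{-1} = \mathrm{id}$ on subsets, and $\pi^{-1}\pi = \mathrm{id}$ precisely on unions of cosets of $q\mathbb{Z}^n$, which gives the inverse bijection claim.

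No step should present a serious obstacle: the only place where anything nontrivial happens is the use of $q \ge 2e+1$ to guarantee that $\pi$ identifies $S(n,e)$ with $S(n,e,q)$ without collapsing, and all remaining verifications are bookkeeping.
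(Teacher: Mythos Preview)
Your proposal is correct and follows exactly the approach of the paper, which presents the proposition only as a restatement of the informal discussion preceding it (the bijection $S(n,e)\to S(n,e,q)$ for $q\ge 2e+1$, the pullback of a tiling via $\pi$, and the converse). Your write-up simply fills in the routine verifications the paper leaves implicit, and every step is sound.
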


\noindent Thus to know all about $PL(n,e,q)$-codes, it suffices to study $PL(n,e)$-codes. 

Error-correcting codes in the Lee metric have been first investigated by Golomb and Welch \cite{GW70}. In the paper, they explicitly construct $PL(1,e,2e+1)$-codes, $PL(2,e,2e^2 + 2e + 1)$-codes, and $PL(n,1,2n+1)$-codes. On the other hand, they conjecture the nonexistence of perfect Lee codes for other $n$ and $e$. 

\begin{conjecture}
  For $n \ge 3$ and $e \ge 2$, there exist no $PL(n,e)$-codes.
\end{conjecture}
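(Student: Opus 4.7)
The statement is the full Golomb--Welch conjecture, open since 1970; any complete proof along these lines would be a major breakthrough, so what follows is necessarily aspirational. The plan I would attempt splits the $n$-axis into two regimes for each fixed $e$, and attacks $e$ inductively.

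Algebraic setup: in the group algebra $\mathbb{Z}[\mathbb{Z}^n]$, identified with the Laurent polynomial ring $\mathbb{Z}[x_1^{\pm 1},\dots,x_n^{\pm 1}]$, write
\[
  P_{n,e}(x_1,\dots,x_n) \;=\; \sum_{\mathbf{v} \in S(n,e)} x_1^{v_1} \cdots x_n^{v_n}.
\]
A $PL(n,e)$-code exists if and only if $P_{n,e}$ admits a $\{0,1\}$-coefficient formal Laurent ``complement'' $\mathbf{1}_C$ with $P_{n,e} \cdot \mathbf{1}_C$ equal to the sum of all monomials over $\mathbb{Z}^n$. Every obstruction must therefore come from an incompatibility between the vanishing locus of $P_{n,e}$ on the complex torus $(\mathbb{C}^\times)^n$ and the structure of possible complements.

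Large-$n$ regime. Here I would run a Fourier/character argument exploiting the hyperoctahedral symmetry $S_n \wr \mathbb{Z}/2$ acting on $S(n,e)$. The Lee-sphere generating function decomposes into isotypic components, and evaluating at carefully chosen tuples of low-order roots of unity forces many vanishings. The target would be to show this vanishing locus becomes so dense on the rational torus as $n \to \infty$ with $e$ fixed that no $\{0,1\}$-complement can survive. Volume considerations (a periodic tiling requires $\lvert S(n,e) \rvert$ to divide the index of a sublattice) would guide the choice of evaluation points. This is expected to close out $n$ larger than some explicit function of $e$, though such an argument has not yet been pushed through in the literature.

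Bounded-$n$ regime. Here I would use a local anchor argument: normalize so $\mathbf{0}$ is a code point, enumerate possible code points within Lee distance on the order of $4e$ of the origin, and use the rigidity of how Lee cross-polytopes must abut along facets to force a cascade of positional constraints. For many small $(n,e)$ this collapses to an impossibility by a finite case analysis. Induction on $e$ by peeling the shell $S(n,e) \setminus S(n,e-1)$, and induction on $n$ via the coordinate projection $\mathbb{Z}^n \to \mathbb{Z}^{n-1}$, would propagate contradictions from the base cases of Post and Horak.

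Main obstacle. There is a wide intermediate window of $n$ in terms of $e$ in which neither technique is effective: Fourier density bounds require $n$ much larger than any currently tractable threshold, while the local combinatorial rigidity dissipates once $n$ is even moderately large compared to $e$. This is exactly why the conjecture has withstood attack for five decades, and my note's own contribution is only to isolate a further arithmetic obstruction in the $e = 2$ case along a sparse, conjecturally infinite family of $n$, leaving the full conjecture firmly out of reach.
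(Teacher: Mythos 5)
The statement you are asked to prove is the Golomb--Welch conjecture itself, which the paper records as an open \emph{conjecture} and does not prove; the paper's actual contribution is only the special case $e=2$ for a restricted (conjecturally infinite) family of $n$ where $2n^2+2n+1$ is prime, obtained via Szegedy's tiling-to-character criterion and power-sum identities in $\mathbb{Z}/p\mathbb{Z}$. Your proposal, by your own admission, is not a proof but a research program, so there is a genuine gap --- indeed the entire argument is a gap. Concretely: the ``large-$n$ regime'' step asserts that evaluating $P_{n,e}$ at tuples of roots of unity forces a vanishing locus ``so dense'' that no $\{0,1\}$-complement survives, but no such density criterion is formulated, no evaluation points are exhibited, and no mechanism is given for converting torus vanishing into nonexistence of a complement (the standard obstruction only shows that $\mathbf{1}_C$ must vanish wherever $P_{n,e}$ does not, which by itself rules out nothing). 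The ``bounded-$n$ regime'' is a finite case analysis that by construction can only ever handle finitely many $(n,e)$, and the proposed inductions on $e$ (shell peeling) and on $n$ (coordinate projection) are not valid as stated: a $PL(n,e)$-code does not project to a $PL(n-1,e)$-code, nor does removing a shell of $S(n,e)$ yield a tiling by $S(n,e-1)$, so contradictions cannot be ``propagated'' from the known base cases of Post and Horak in the way you describe.

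You correctly identify that the intermediate window of $n$ relative to $e$ is where all known techniques fail, but identifying the obstacle is not the same as overcoming it. If your goal is to match what the paper actually establishes, the right target is Theorem~\ref{thm:main}: assume $p = 2n^2+2n+1$ is prime, invoke Theorem~\ref{thm:til_alg} to reduce existence of a $PL(n,2)$-code to the existence of $x_1,\dots,x_n \in \mathbb{Z}/p\mathbb{Z}$ such that $0$, $\pm x_i$, $\pm 2x_i$, $\pm x_i \pm x_j$ partition $\mathbb{Z}/p\mathbb{Z}$, and then derive the recursion \eqref{eqn:main} on the power sums $S_{2k}$ to force $n$ into the numerical semigroup-like set $X = \{ax+by : x \ge 1, y \ge 0\}$, which fails for the listed $n$. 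That argument is complete and self-contained; the full conjecture is not within reach of either your sketch or the paper.
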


The case when $e$ is ``large'' compared to $n$ is studied extensively in the literature. Golomb and Welch \cite{GW70} proved using a compactness argument that for each $n \ge 3$, there exists a sufficiently large $\rho_n$ such that there exist no $PL(n,e)$-codes for each $e \ge \rho_n$. An effective form of this theorem, that $PL(n,e,q)$-codes do not exist for $3 \le n \le 5, e \ge n-1, q \ge 2e+1$ and $n \ge 6, e \ge \frac{\sqrt{2}}{2} n - \frac{3}{4}\sqrt{2} - \frac{1}{2}, q \ge 2e+1$, was subsequently shown by Post \cite{Pos75}. Lepist\"o \cite{Lep81} improved the bound asymptotically and obtained the following theorem. 

\begin{theorem}
  For any $n, e, q$ satisfying $n < (e+2)^2 / 2.1$ and $e \ge 285$ and $q \ge 2e + 1$, there exist no $PL(n,e,q)$-codes.
\end{theorem}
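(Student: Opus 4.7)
The plan is to argue by contradiction: assume a $PL(n,e,q)$-code $C$ exists with the stated parameters, and pass via the proposition above to a translational tiling of $\mathbb{Z}^n$ by the Lee sphere $S := S(n,e)$. The overall strategy, following the Golomb--Welch--Post paradigm, is to analyze how tiles fit together locally and then extract a global inequality between $n$ and $e$.

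First, I would fix the tile $S_0 = S$ at the origin and study its neighborhood in the tiling. For each standard basis direction $\pm \mathbf{e}_i$, the lattice point $\pm(e+1)\mathbf{e}_i$ lies just outside $S_0$, so the unique tile $S_{v_i}$ of the tiling covering it must have center at Lee distance exactly $2e+1$ from $\mathbf{0}$. A direct case analysis, using that $d(v_i, \pm(e+1)\mathbf{e}_i) \le e$ while $d(v_i, \mathbf{0}) \ge 2e+1$, pins down $v_i$ to lie in a small explicit ``cap'' region near $\pm(2e+1)\mathbf{e}_i$, whose structure I would describe precisely in terms of bounded perturbations in the coordinates transverse to $\mathbf{e}_i$.

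Second, I would aggregate this local information into a global incidence count. For each tile $S_v$ in a large periodic window of $\mathbb{Z}^n$ and each of the $2n$ coordinate directions, record the corresponding neighbor tile. Double-counting the recorded incidences, once by source tile (lower bound $\sim 2n$ per tile) and once by target tile (upper bound set by how many distinct directions a single tile can simultaneously serve, which is limited by disjointness of translated spheres), yields an inequality relating $n$ to the structure of the cap region. Lepist\"o's improvement over Post's linear bound, as I understand it, is to perform this bookkeeping at the level of \emph{pairs} of corner incidences rather than singletons: a refined second-order count produces a quadratic inequality of the form $n < c(e+2)^2$ in place of the previously known linear bound.

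The main obstacle I anticipate is extracting the sharp asymptotic constant $1/2.1$ rather than a cruder version. This requires an optimized estimate of the cap volume and a careful balancing of contributions from several neighbor types, together with a demonstration that the lower-order error terms---ultimately dominated by the polynomial $|S(n,e)|$ of degree $n$ in $e$---are absorbed once $e \ge 285$. I would expect the combinatorial skeleton of the argument to be short once the correct weighting of pair-incidences is chosen, but the numerical estimates justifying the explicit constants $2.1$ and $285$ to constitute the lengthiest and most delicate portion of the proof.
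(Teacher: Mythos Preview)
The paper does not prove this theorem at all: it is quoted in the introduction as a result of Lepist\"o \cite{Lep81}, with no argument supplied. So there is no ``paper's own proof'' to compare your proposal against; the statement functions purely as background, and the paper's actual contribution is Theorem~\ref{thm:main} on $PL(n,2)$-codes, proved by an entirely different (algebraic, power-sum) method.

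As for your sketch itself, it is a plausible narrative in the Golomb--Welch--Post tradition, but it is not a proof. The crucial content of Lepist\"o's improvement is precisely the step you flag as the ``main obstacle'': identifying the correct second-moment quantity and carrying out the explicit numerical estimates that yield the constants $2.1$ and $285$. Your proposal names that this must be done but does not do it, and the phrase ``once the correct weighting of pair-incidences is chosen'' hides the entire idea. Without that, what you have is an outline of Post's linear bound together with the assertion that a refinement exists; to turn it into an actual proof you would need to reproduce Lepist\"o's specific counting argument and its error analysis.
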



Another direction of approach is to focus on small $n$. Gravier, Mollard, and Payan \cite{GMP98} showed the nonexistence of $PL(3,e)$-codes by analyzing possible local configurations. Later a computer-based proof of the nonexistence of $PL(4,e)$-codes was given by \v{S}pacapan \cite{Spa07}, and Horak \cite{Hor09_2} further extended the theorem to prove nonexistence of $PL(n,e)$-codes for $3 \le n \le 5$ and $e \ge 2$. In recent years, the case $e = 2$ has been investigated for reasonably small $n$. For $n = 5, 6$, Horak \cite{Hor09} showed that $PL(5,2)$-codes and $PL(6,2)$-codes do not exist, and Horak and Gros\v{e}k \cite{HG14} further showed using a computer that for $7 \le n \le 12$ there are no linear $PL(n,2)$-codes, i.e., $PL(n,2)$-codes that is a lattice in $\mathbb{Z}^n$. 

In this note, we continue along this line and provide a number theoretic condition under which $PL(n,2)$-codes do not exist. In particular, we prove the following theorem. 

\begin{theorem} \label{thm:main}
  Suppose $p = 2n^2 + 2n + 1$ is prime. Let $a$ be the smallest positive integer for which $p \mid 4^a + 4n + 2$ and $b$ be the smallest positive integer for which $p \mid 4^b - 1$. (For convenience let $a = \infty$ if there is no $a$ with $p \mid 4^a + 4n + 2$.) If the equation $a(x+1) + by = n$ has no nonnegative integer solutions, then $PL(n,2)$-codes do not exist. For instance, there are no $PL(n,2)$-codes for $n = 5, 7, 9, 12, 14, 17, \ldots$. 
\end{theorem}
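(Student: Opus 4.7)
My plan is to reduce the existence of a $PL(n,2)$-code to an algebraic splitting problem modulo $p = 2n^2 + 2n + 1$, and then rule out that splitting by analyzing the multiplicative orbit structure of $\mathbb{F}_p^{*}$ under the action of $\langle 4 \rangle$.

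First I would argue that any $PL(n,2)$-code gives rise to an $\mathbb{F}_p$-linear splitting, i.e., a surjective homomorphism $\phi : \mathbb{Z}^n \to \mathbb{F}_p$ whose restriction to $S(n,2)$ is a bijection. Setting $s_i = \phi(e_i) \in \mathbb{F}_p^{*}$, this is equivalent to the partition
\[
  \mathbb{F}_p = \{0\} \sqcup \{\pm s_i : 1 \le i \le n\} \sqcup \{\pm 2 s_i : 1 \le i \le n\} \sqcup \{\pm s_i \pm s_j : 1 \le i < j \le n\}.
\]
To obtain this I would first reduce to a periodic setting via a Lagarias--Wang-type result (every tile admitting a tiling admits a periodic tiling), passing to a $PL(n,2,q)$-code in $(\mathbb{Z}/q\mathbb{Z})^n$. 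Then, because $|S(n,2)| = p$ is prime, a character-theoretic argument should force all nontrivial Fourier obstructions on $S(n,2)$ to concentrate in a single cyclic subgroup of order $p$ of the dual group, presenting the code as (a translate of) a linear hyperplane code modulo $p$ and yielding the desired $\phi$.

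Next I would exploit the crucial identity $(2n+1)^2 \equiv -1 \pmod p$, which follows from $(2n+1)^2 = 2p - 1$. This identifies $\iota := 2n+1$ as a square root of $-1$ in $\mathbb{F}_p$, so $4n+2 = 2\iota$ and the hypothesis $p \mid 4^a + 4n + 2$ becomes $4^a \equiv -2\iota \pmod p$. With $H := \langle 4 \rangle \le \mathbb{F}_p^{*}$ of order $b$, this pins $-2\iota$ to a specific power of $4$, so $a$ is the discrete logarithm of $-2\iota$ to base $4$. The technical heart is then to organize $\{s_1, \ldots, s_n\}$ according to orbits of multiplication by $4$. For a generic $s_i$, the sequence $s_i, 4 s_i, 16 s_i, \ldots$ returns to $s_i$ after $b$ steps, contributing $b$ indices; however, the orbit through the special position linked to $\iota$ (via $4^a \equiv -2\iota$) closes up after $a$ steps instead, and a separate tracing of how $\iota$ itself sits among the cross-terms $\pm s_i \pm s_j$ should force an additional $a$-block, contributing the ``$+1$'' in $a(x+1)$. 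Summing orbit sizes yields
\[
  n = a(x+1) + by
\]
for nonnegative integers $x, y$ (where $x$ counts $a$-orbits beyond the distinguished one and $y$ counts $b$-orbits), contradicting the hypothesis.

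The main obstacle will be the first step: moving from a potentially aperiodic tiling of $\mathbb{Z}^n$ to a purely algebraic linear splitting of $\mathbb{F}_p$. The reduction rests on combining a periodicity statement for Lee-sphere tilings with the character-theoretic consequence of primality of $|S(n,2)|$, neither of which is formal. The orbit bookkeeping in the third step is delicate -- in particular, correctly identifying which sphere-type each $4 s_i$ belongs to and ensuring the distinguished orbit containing $\iota$ is accounted for without double-counting -- but, once the splitting is in hand, it reduces to finite group theory in $\mathbb{F}_p^{*}$ using the identity $\iota^2 \equiv -1$.
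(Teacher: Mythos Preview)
Your reduction to an $\mathbb{F}_p$-splitting is the right first move, and in fact the paper obtains it directly by citing Szegedy's theorem: if a set of prime cardinality $p$ tiles $\mathbb{Z}^n$ by translations, then some homomorphism $\mathbb{Z}^n \to \mathbb{Z}/p\mathbb{Z}$ restricts to a bijection on it. So no Lagarias--Wang or Fourier argument is needed.

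The gap is in your third step. You assert that for a ``generic'' $s_i$ the sequence $s_i, 4s_i, 16s_i, \ldots$ stays inside $\{\pm s_1,\ldots,\pm s_n\}$ and closes up after $b$ (or sometimes $a$) steps. There is no reason for this: the set $\{\pm s_i\}$ is not, a priori, invariant under multiplication by $4$. Given only the partition
\[
\mathbb{F}_p = \{0\} \sqcup \{\pm s_i\} \sqcup \{\pm 2s_i\} \sqcup \{\pm s_i \pm s_j\},
\]
the element $4s_i$ is merely some nonzero element of $\mathbb{F}_p$; it can perfectly well land in the cross-term block $\{\pm s_j \pm s_k\}$. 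Your proposed check of the ``special'' orbit illustrates the problem: $4^a s_i = -2\iota\, s_i$ equals $\pm s_i$ only when $p=5$. So the orbit-counting identity $n = a(x{+}1) + by$ cannot be obtained by tracking multiplication by $4$ on the $s_i$ themselves.

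The mechanism in the paper that actually produces $a$ and $b$ is analytic rather than orbit-theoretic. One computes the $2k$-th power sum of \emph{all} elements of $\mathbb{F}_p$ via the partition, obtaining
\[
(4^k + 4n + 2)\,S_{2k} \;+\; 2\sum_{t=1}^{k-1}\binom{2k}{2t} S_{2t}\,S_{2(k-t)} \;=\; 0 \qquad (1 \le k < \tfrac{p-1}{2}),
\]
where $S_{2k} = \sum_i s_i^{2k}$. The numbers $a,b$ enter only through the leading coefficient: $4^k + 4n + 2 \equiv 0$ precisely when $k \equiv a \pmod b$. Setting $X = \{ax+by : x\ge 1,\, y\ge 0\}$, a straightforward induction on $k$ shows $S_{2k} = 0$ for every $k \notin X$ (the cross terms vanish since $X$ is closed under addition), and then Newton's identities give $e_k = 0$ for all $k \le n$ with $k \notin X$. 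But $e_n = (s_1\cdots s_n)^2 \neq 0$, forcing $n \in X$, i.e., $a(x{+}1)+by = n$ has a nonnegative solution. This is where the ``$a$'' and ``$b$'' blocks genuinely come from; they are \emph{not} literal orbit lengths on $\{s_i\}$.
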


To illustrate the strength of this theorem, we provide numerical data concerning the number of $n$ to which the theorem can be applied. As in Table~\ref{tab:num_data}, if $2n^2 + 2n + 1$ is indeed prime, in most cases the second condition about the equation having no nonnegative solutions is also satisfied. It is reasonable to expect that there are infinitely many $n$ such that $2n^2 + 2n + 1$ is prime, although it is far from being proved. This is a special case of the Bunyakovsky conjecture, and moreover the heuristics of the Bateman--Horn conjecture \cite{BH62} expects there to be asymptotically $C x / \log x$ such $n \le x$ for some absolute constant $C$. 

\begin{table}
  \centering
    \begin{tabular}{c|cc}
      \hspace{1em} $x$ \hspace{1em} & \begin{tabular}{c} \# of $n \le x$ with \\ $2n^2 + 2n + 1$ prime \end{tabular} & \begin{tabular}{c} \# of $n \le x$ to which \\ Theorem~\ref{thm:main} can be applied \end{tabular} \\
      \hline
      $10^1$ & $6$ & $4$ \\
      $10^2$ & $36$ & $34$ \\
      $10^3$ & $225$ & $222$ \\
      $10^4$ & $1645$ & $1642$ \\
      $10^5$ & $12706$ & $12702$
    \end{tabular}
    \caption{The number of $n$ to which Theorem~\ref{thm:main} can be applied}
    \label{tab:num_data}
\end{table}

The condition $2n^2 + 2n + 1 = \lvert S(n,2) \rvert$ being prime is included in order to use a result that allows us to translate the tiling problem to a purely algebraic problem. The following theorem is proved in \cite{Sze98}. 

\begin{theorem} \label{thm:til_alg}
  Let $T \subset \mathbb{Z}^n$ be a finite subset of prime size $p$, and suppose that $T - T \subset \mathbb{Z}^n$ generates $\mathbb{Z}^n$ as an abelian group. Then there exists a tiling of $\mathbb{Z}^n$ by translates of $T$ if and only if there exists a homomorphism $\phi : \mathbb{Z}^n \to \mathbb{Z}/p\mathbb{Z}$ that restricts to a bijection from $T$ to $\mathbb{Z}/p\mathbb{Z}$. 
\end{theorem}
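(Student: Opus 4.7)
The $(\Leftarrow)$ direction is immediate. Given $\phi : \mathbb{Z}^n \to \mathbb{Z}/p\mathbb{Z}$ whose restriction to $T$ is a bijection, set $L := \ker \phi$, a sublattice of $\mathbb{Z}^n$ of index $p$. For each $v \in \mathbb{Z}^n$ there is a unique $t \in T$ with $\phi(t) = \phi(v)$, so $v - t \in L$. This exhibits $\mathbb{Z}^n = L + T$ as a tiling by translates of $T$.

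For the $(\Rightarrow)$ direction, suppose $\mathbb{Z}^n = A \oplus T$ is a tiling. The plan is to reduce to a tiling of a finite abelian group and then extract the desired homomorphism via character theory. First, I would pick a full-rank sublattice $L_0 \leq \mathbb{Z}^n$ with the projection of $T$ to $G_0 := \mathbb{Z}^n / L_0$ injective (take $L_0 = N\mathbb{Z}^n$ for $N$ larger than the diameter of $T$). Next, I would extract from the given tiling a periodic one $\mathbb{Z}^n = A^\star \oplus T$ in which $A^\star$ is invariant under some full-rank sublattice $L \leq L_0$. This uses a compactness / K\"onig's lemma argument on the finite collection of local patterns of $A$ in bounded boxes modulo $L_0$, combined with a R\'edei-type periodicity result that leverages the prime cardinality $|T| = p$. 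Passing to the quotient, one obtains a factorization $G = \bar A^\star \oplus \bar T$ of the finite abelian group $G := \mathbb{Z}^n / L$, with $|\bar T| = p$.

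In the finite group $G$, the identity $\bar A^\star \cdot \bar T = \sum_{g \in G} g$ in $\mathbb{C}[G]$, evaluated at any nontrivial character $\chi : G \to \mathbb{C}^\times$, yields $\chi(\bar A^\star)\,\chi(\bar T) = 0$. If $\chi(\bar T) \neq 0$ for every nontrivial $\chi$, then $\chi(\bar A^\star) = 0$ for every nontrivial $\chi$, forcing $\bar A^\star$ to be a scalar multiple of $\sum_g g$ and contradicting that it is a proper $\{0,1\}$-valued subset. So some nontrivial $\chi$ satisfies $\chi(\bar T) = 0$, and a refinement (using the Lam--Leung classification of vanishing sums of roots of unity) arranges that $\chi$ has order exactly $p$. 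Then $\chi|_T$ takes values in $\mu_p$, and the vanishing relation $\sum_{t \in T} \chi(t) = 0$ forces these $p$ values to enumerate the $p$-th roots of unity with multiplicity one: the multiplicities $(n_0, \ldots, n_{p-1})$ satisfy $\sum_k n_k \zeta^k = 0$ and $\sum_k n_k = p$, and the $\mathbb{Q}$-linear independence of $1, \zeta, \ldots, \zeta^{p-1}$ modulo their common sum $1 + \zeta + \cdots + \zeta^{p-1} = 0$ forces $n_0 = \cdots = n_{p-1} = 1$. Lifting $\chi$ to a character $\tilde\chi$ of $\mathbb{Z}^n$, one gets $\tilde\chi(t - t') \in \mu_p$ for all $t, t' \in T$; since $T - T$ generates $\mathbb{Z}^n$, $\tilde\chi$ takes $\mu_p$-values on all of $\mathbb{Z}^n$, furnishing the desired homomorphism $\phi : \mathbb{Z}^n \to \mathbb{Z}/p\mathbb{Z}$ via the identification $\mu_p \cong \mathbb{Z}/p\mathbb{Z}$.

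The main obstacle is the extraction of the periodic tiling $A^\star$ in the reduction step. The analogous ``periodic tiling conjecture'' for $\mathbb{Z}^n$ is open in full generality, and a naive compactness argument only produces a tiling with partial translational symmetry. The crucial leverage is the prime cardinality $|T| = p$ together with the generating hypothesis on $T - T$, which, via R\'edei's theorem on factorizations of finite abelian groups applied iteratively along a chain of finite quotients $\mathbb{Z}^n / L_k$, forces full periodicity of \emph{some} tiling by $T$ even when the originally given $A$ is aperiodic. A secondary technical point is arranging that the vanishing character $\chi$ of $G$ has order exactly $p$ rather than some larger multiple; this is where the finer structure of vanishing sums of $p$ roots of unity (as opposed to mere existence of a vanishing character) is needed.
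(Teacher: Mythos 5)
First, note that the paper does not prove this statement at all: it is quoted verbatim from Szegedy's preprint \cite{Sze98}, so you are being compared against an external proof rather than one in the text. Your $(\Leftarrow)$ direction is correct and is the easy half. The $(\Rightarrow)$ direction, however, has a genuine gap exactly where you locate the ``main obstacle'': the extraction of a fully periodic tiling. A compactness/K\"onig argument on local patterns does not produce a tiling invariant under a full-rank sublattice when $n \ge 2$ (this is precisely why the periodic tiling conjecture is hard, and in fact false in high dimensions), and the R\'edei-type result you invoke does not apply: R\'edei's theorem concerns normalized factorizations $G = A_1 \oplus \cdots \oplus A_k$ in which \emph{every} factor has prime cardinality, whereas here only $T$ does and the complement $A$ has unconstrained size. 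So the reduction to a factorization $G = \bar A^\star \oplus \bar T$ of a finite group is asserted, not proved, and it is the entire content of the theorem. What Szegedy actually proves at this step is a concrete periodicity lemma: if $A \oplus T = \mathbb{Z}^n$ with $\lvert T \rvert = p$ prime, then $A + p(t - t') = A$ for all $t, t' \in T$; since $T - T$ generates $\mathbb{Z}^n$, this makes $A$ invariant under $p\mathbb{Z}^n$. That combinatorial lemma is the heart of the argument and is missing from your proposal.

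Your second acknowledged difficulty --- forcing the vanishing character $\chi$ to have order exactly $p$ --- is also a real gap as written, and Lam--Leung does not resolve it by itself: a vanishing sum of $p$ roots of unity need not consist of $p$-th roots of unity (e.g.\ $1 + \omega + \omega^2 + \zeta + \zeta^{-1} = 0$ with $\omega^3 = 1$, $\zeta^2 = 1$ is a vanishing sum of $5$ roots of unity of order dividing $6$). Szegedy's route sidesteps this entirely: once $A$ is known to be $p\mathbb{Z}^n$-periodic, the relevant quotient is $(\mathbb{Z}/p\mathbb{Z})^n$, where \emph{every} nontrivial character has order exactly $p$, and then your final step (a vanishing sum of $p$ $p$-th roots of unity must be $1 + \zeta + \cdots + \zeta^{p-1}$, hence $\chi$ restricted to $T$ is a bijection onto $\mu_p$) goes through verbatim. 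So the character-theoretic endgame of your proposal is essentially right, but both of the points you flag as ``technical'' are the actual mathematical content, and neither is supplied.
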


This research did not receive any specific grant from funding agencies in the public, commercial, or not-for-profit sectors.

\section{Proof of Theorem~\ref{thm:main}}

In this section, we let $2n^2 + 2n + 1 = p$ be a prime. Since $\mathbb{Z}^n$ is a free abelian group generated by the unit vectors $e_1, \dots, e_n$, a homomorphism $\phi : \mathbb{Z}^n \to \mathbb{Z}/p\mathbb{Z}$ is determined uniquely by the values $x_i = \phi(e_i)$ for $1 \le i \le n$. Then $\phi$ restricting to a bijection from $S(n,2)$ to $\mathbb{Z}/p\mathbb{Z}$ is equivalent to the sets
\[
  \{0\}, \quad \{\pm x_i\}_{1 \le i \le n}, \quad \{\pm 2 x_i\}_{1 \le i \le n}, \quad \{\pm x_i \pm x_j \}_{1 \le i < j \le n}
\]
forming a partition of $\mathbb{Z}/p\mathbb{Z}$. 

Suppose that such $x_1, \dots, x_n \in \mathbb{Z}/p\mathbb{Z}$ exist. The sum of $2k$-th powers of all the elements is 
\begin{align*}
  \sum_{i=1}^{n} \bigl( & x_i^{2k} + (-x_i)^{2k} + (2x_i)^{2k} + (-2x_i)^{2k} \bigr) \\
  & \qquad + \sum_{1 \le i < j \le n}^{} \bigl( (x_i + x_j)^{2k} + (x_i - x_j)^{2k} + (-x_i + x_j)^{2k} + (-x_i - x_j)^{2k} \bigr) \\
  &= 2(4^k + 1) \sum_{i=1}^{n} x_i^{2k} + \sum_{1 \le i < j \le n}^{} 4 \sum_{t=0}^{k} \binom{2k}{2t} x_i^{2t} x_j^{2(k-t)} \\
  &= (2^{2k+1} + 4(n-1) + 2) \sum_{i=1}^{n} x_i^{2k} + 4 \sum_{t=1}^{k-1} \sum_{1 \le i < j \le n}^{} \binom{2k}{2t} x_i^{2t} x_j^{2(k-t)} \\
  &= (2^{2k+1} + 4n - 2) S_{2k} + 2 \sum_{t=1}^{k-1} \binom{2k}{2t} (S_{2t} S_{2(k-t)} - S_{2k}) \\
  &= (2^{2k} + 4n + 2) S_{2k} + 2 \sum_{t=1}^{k-1} \binom{2k}{2t} S_{2t} S_{2(k-t)}
\end{align*}
where we denote $S_t = \sum_{i=1}^{n} x_i^t$. On the other hand, this is the sum of the $2k$-th powers of all elements of $\mathbb{Z}/p\mathbb{Z}$. Thus
\begin{equation} \label{eqn:main}
  (4^k + 4n + 2) S_{2k} + 2 \sum_{t=1}^{k-1} \binom{2k}{2t} S_{2t} S_{2(k-t)} = 
  \begin{cases}
    0 & \text{if } p-1 \nmid 2k, \\
    -1 & \text{if } p-1 \mid 2k.
  \end{cases}
\end{equation}

Let $a$ and $b$ be the least positive integers satisfying $p \mid 4^a + 4n + 2$ and $p \mid 4^b - 1$. Consider the set
\[
  X = \{ ax + by : x \ge 1, y \ge 0 \}.
\]
Note that the set $X$ is closed under addition. We now claim the following. 

\begin{lemma} \label{lem:1}
  If $1 \le k < (p-1)/2$ is not in $X$, then $S_{2k} = 0$. 
\end{lemma}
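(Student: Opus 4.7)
The plan is to proceed by strong induction on $k$ and exploit equation~(\ref{eqn:main}). Since $k < (p-1)/2$ implies $2k < p-1$ and hence $p-1 \nmid 2k$, the right-hand side of (\ref{eqn:main}) vanishes, so the identity we work with is
\[
  (4^k + 4n + 2) S_{2k} = -2 \sum_{t=1}^{k-1} \binom{2k}{2t} S_{2t} S_{2(k-t)}.
\]
Assuming the lemma holds for all $k' < k$, the goal is to show that both sides are forced to vanish when $k \notin X$, and from this to extract $S_{2k} = 0$.

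The first step is to kill the sum on the right. The key observation is that $X$ is closed under addition: if $u = ax_1 + by_1$ and $v = ax_2 + by_2$ with $x_i \ge 1$ and $y_i \ge 0$, then $u + v = a(x_1 + x_2) + b(y_1 + y_2) \in X$. Now, in each term of the sum, $t$ and $k - t$ lie in $\{1, \dots, k-1\}$. If $S_{2t} S_{2(k-t)} \neq 0$, then by the inductive hypothesis both $t$ and $k - t$ belong to $X$, so by additive closure $k = t + (k-t) \in X$, contradicting $k \notin X$. Hence every term of the sum vanishes, giving $(4^k + 4n + 2) S_{2k} \equiv 0 \pmod{p}$.

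It remains to verify that the coefficient $4^k + 4n + 2$ is a unit modulo $p$. Here I would use the definitions of $a$ and $b$ directly: if $p \mid 4^k + 4n + 2$, then $4^k \equiv -(4n + 2) \equiv 4^a \pmod{p}$, so $4^{k-a} \equiv 1 \pmod{p}$, which forces $b \mid k - a$ and $k \ge a$. Writing $k = a + by$ with $y \ge 0$, we get $k = a \cdot 1 + b \cdot y \in X$, contradicting the hypothesis. Therefore $4^k + 4n + 2 \not\equiv 0 \pmod p$, and we may divide to conclude $S_{2k} = 0$.

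For the base case $k = 1$, the sum in (\ref{eqn:main}) is empty, so the argument reduces to the second step alone; the same reasoning shows $1 \notin X$ forces $a \ge 2$, hence $4 + 4n + 2 \not\equiv 0 \pmod p$ and $S_2 = 0$. The main conceptual point is the interplay between the additive semigroup structure of $X$ (which controls the vanishing of the convolution-like sum) and the multiplicative order condition (which controls invertibility of the leading coefficient); once these are identified, the induction itself is mechanical, and I do not anticipate a genuine obstacle.
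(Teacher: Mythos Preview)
Your proof is correct and follows essentially the same approach as the paper's: strong induction on $k$, using the additive closure of $X$ to kill every term in the convolution sum, and using that $p \mid 4^k + 4n + 2$ forces $k \in X$ to ensure the leading coefficient is invertible. You supply slightly more detail than the paper (explicitly verifying via the order of $4$ that $k = a + by$, and treating the base case separately), but the argument is the same.
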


\begin{proof}
  We prove by induction on $k$. Suppose $S_{2k} = 0$ for all $k \le k_0 - 1$ that is not in $X$. We now show that $S_{2k_0} = 0$ if $k_0 \notin X$. Assume $k_0$ is not in $X$. Since any $k$ for which $p \mid 4^k + 4n + 2$ is of the form $a + by$ and thus in $X$, we see that $p \nmid 4^{k_0} + 4n + 2$. 

  Moreover, because $k_0 \notin X$ and $X$ is closed under addition, for each $t$ either $t$ or $k_0 - t$ is not in $X$. From Equation~\ref{eqn:main} and the induction hypothesis it follows that 
  \[
    0 = (4^{k_0} + 4n + 2) S_{2k_0} + 2 \sum_{t=1}^{k_0-1} \binom{2k_0}{2t} S_{2t} S_{2(k_0-t)} = (4^{k_0} + 4n + 2) S_{2k_0} 
  \]
  in $\mathbb{Z}/p\mathbb{Z}$. Because $4^k + 4n + 2 \neq 0$, we immediately obtain $S_{2k_0} = 0$. 
\end{proof}

Let 
\[
  e_k = \sum_{1 \le i_1 < \dots < i_k \le n}^{} x_{i_1}^2 x_{i_2}^2 \cdots x_{i_k}^2
\]
be the elementary symmetric polynomials with respect to $x_1^2, x_2^2, \dots, x_n^2$. Using a similar argument, we prove the following lemma. 

\begin{lemma}
  If $1 \le k \le n$ is not in $X$, then $e_k = 0$. 
\end{lemma}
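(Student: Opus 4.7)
The plan is to mimic the proof of Lemma~\ref{lem:1} but with Newton's identities playing the role of equation \eqref{eqn:main}. Recall that since the $e_k$ are the elementary symmetric functions in $x_1^2,\dots,x_n^2$ and the $S_{2k}$ are the corresponding power sums, Newton's identities give
\[
  k\,e_k \;=\; \sum_{i=1}^{k} (-1)^{i-1} e_{k-i}\, S_{2i}
\]
for every $1 \le k \le n$, with the convention $e_0 = 1$. I will induct on $k$, assuming $e_j = 0$ for all $j < k$ with $j \notin X$, and show that $k \notin X$ forces $e_k = 0$.

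Fix $k \le n$ with $k \notin X$, and look at each term $e_{k-i} S_{2i}$ in Newton's identity. Since $X$ is closed under addition and $k \notin X$, whenever $1 \le i \le k-1$ at least one of $i$, $k-i$ must fail to lie in $X$; in the former case $S_{2i} = 0$ by Lemma~\ref{lem:1} (note $i \le n < (p-1)/2$ for $n \ge 2$), and in the latter case $e_{k-i} = 0$ by the inductive hypothesis. The remaining term is $i = k$, which contributes $(-1)^{k-1} S_{2k}$; this also vanishes by Lemma~\ref{lem:1} because $k \notin X$ and $k \le n < (p-1)/2$. Therefore $k\,e_k = 0$ in $\mathbb{Z}/p\mathbb{Z}$, and since $1 \le k \le n < p$, we may invert $k$ to conclude $e_k = 0$.

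The only step that requires care is the range check ensuring that Lemma~\ref{lem:1} actually applies to every $S_{2i}$ we encounter — this is why we need $k \le n$ rather than allowing $k$ up to $p-2$, since we crucially use $k < (p-1)/2$ so that $S_{2k}$ vanishes as claimed. This is the main (minor) obstacle, and it is handled simply by observing $n < n^2 + n = (p-1)/2$ for $n \ge 2$.
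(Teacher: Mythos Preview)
Your proof is correct and follows essentially the same approach as the paper: induction via Newton's identities, using closure of $X$ under addition to kill each cross term by either the inductive hypothesis (for $e_{k-i}$) or Lemma~\ref{lem:1} (for $S_{2i}$), then inverting $k$ modulo $p$. Your additional explicit verification that $k \le n < (p-1)/2$ so Lemma~\ref{lem:1} applies is a point the paper leaves implicit.
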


\begin{proof}
  We again prove by induction on $k$. Suppose $e_k = 0$ for all $k \le k_0 - 1$ not in $X$, and also assume $k_0 \notin X$. The Newton identities on $x_1^2, \dots, x_n^2$ can be written as 
  \[
    k_0 e_{k_0} = e_{k_0-1} S_2 - e_{k_0-2} S_4 + \dots + (-1)^{k_0-2} e_1 S_{2(k_0-1)} + (-1)^{k_0-1} S_{2k_0}.
  \]
  Because $X$ is closed under addition and $k_0 \notin X$, for each $0 < t < k_0$ either $t \notin X$ or $k_0 - t \notin X$. From Lemma~\ref{lem:1} and the inductive hypothesis, it follows that either $e_t = 0$ or $S_{2(k_0-t)} = 0$. Therefore 
  \begin{align*}
    k_0 e_{k_0} &= e_{k_0-1} S_2 - e_{k_0-2} S_4 + \dots + (-1)^{k_0-2} e_1 S_{2(k_0-1)} + (-1)^{k_0-1} S_{2k_0} \\
    &= (-1)^{k_0-1} S_{2k_0} = 0
  \end{align*}
  and thus $e_{k_0} = 0$ since $k_0 \neq 0$.
\end{proof}

We now note that $e_n = x_1^2 \cdots x_n^2$. Since none of $x_1, \dots, x_n$ is $0$, the square of their product $e_n$ is also not $0$, and hence $n \in X$. Thus by Theorem~\ref{thm:til_alg}, $PL(n,2)$-codes exist only if $n \in X$. This finishes the proof of Theorem~\ref{thm:main}.

\section{Acknowledgments}

The author would like to express gratitude to Peter Horak, who introduced the author to the problem and provided helpful comments.

\bibliographystyle{amsplain}
\bibliography{Central}

\providecommand{\bysame}{\leavevmode\hbox to3em{\hrulefill}\thinspace}
\providecommand{\MR}{\relax\ifhmode\unskip\space\fi MR }
\providecommand{\MRhref}[2]{%
  \href{http://www.ams.org/mathscinet-getitem?mr=#1}{#2}
}
\providecommand{\href}[2]{#2}
\begin{thebibliography}{10}

\bibitem{BH62}
Paul~T. Bateman and Roger~A. Horn, \emph{A heuristic asymptotic formula
  concerning the distribution of prime numbers}, Math. Comp. \textbf{16}
  (1962), 363--367. \MR{0148632}

\bibitem{GW70}
Solomon~W. Golomb and Lloyd~R. Welch, \emph{Perfect codes in the {L}ee metric
  and the packing of polyominoes}, SIAM J. Appl. Math. \textbf{18} (1970),
  302--317. \MR{0256766 (41 \#1422)}

\bibitem{GMP98}
Sylvain Gravier, Michel Mollard, and Charles Payan, \emph{On the non-existence
  of {$3$}-dimensional tiling in the {L}ee metric}, European J. Combin.
  \textbf{19} (1998), no.~5, 567--572. \MR{1637720}

\bibitem{Hor09}
P.~Horak, \emph{On perfect {L}ee codes}, Discrete Math. \textbf{309} (2009),
  no.~18, 5551--5561. \MR{2567958}

\bibitem{Hor09_2}
\bysame, \emph{Tilings in {L}ee metric}, European J. Combin. \textbf{30}
  (2009), no.~2, 480--489. \MR{2489281}

\bibitem{HG14}
Peter Horak and Otokar Gro{\v{s}}ek, \emph{A new approach towards the
  {G}olomb-{W}elch conjecture}, European J. Combin. \textbf{38} (2014), 12--22.
  \MR{3149676}

\bibitem{Lep81}
Timo Lepist{\"o}, \emph{A modification of the {E}lias-bound and nonexistence
  theorems for perfect codes in the {L}ee-metric}, Inform. and Control
  \textbf{49} (1981), no.~2, 109--124. \MR{640192}

\bibitem{Pos75}
K.~A. Post, \emph{Nonexistence theorems on perfect {L}ee codes over large
  alphabets}, Information and Control \textbf{29} (1975), no.~4, 369--380.
  \MR{0446719 (56 \#5043)}

\bibitem{Spa07}
Simon {\v S}pacapan, \emph{Nonexistence of face-to-face four-dimensional
  tilings in the {L}ee metric}, European J. Combin. \textbf{28} (2007), no.~1,
  127--133. \MR{2261809}

\bibitem{Sze98}
Mario Szegedy, \emph{Algorithms to tile the infinite grid with finite
  clusters}, Foundations of Computer Science, 1998. Proceedings. 39th Annual
  Symposium on, IEEE, 1998, pp.~137--145.

\end{thebibliography}

\end{document}